
\documentclass{birkmult}
%
%
%
 \newtheorem{thm}{Theorem}[section]
 
 \newtheorem{lem}[thm]{Lemma}
 
 \theoremstyle{definition}
 
 \theoremstyle{remark}
 \newtheorem{rem}[thm]{Remark}
 
 \numberwithin{equation}{section}
 
       \usepackage{amsmath}
       \usepackage{amsfonts}
       \usepackage{graphicx}
       \usepackage[noadjust]{cite}

       \newcommand{\R}{\ensuremath{\mathbb{R}}}
       \newcommand{\HQ}{\ensuremath{\mathbb{H}}}

       \newcommand{\be}{\begin{equation}}
       \newcommand{\ee}{\end{equation}}
       
       \newcommand{\G}{\mathcal{G}}
       
  \newcommand{\qi}{\ensuremath{\mbox{\boldmath $i$}}}
  \newcommand{\sqi}{\ensuremath{\mbox{\boldmath $\scriptstyle i$}}}
  \newcommand{\qj}{\ensuremath{\mbox{\boldmath $j$}}}
  \newcommand{\sqj}{\ensuremath{\mbox{\boldmath $\scriptstyle j$}}}
  \newcommand{\qk}{\ensuremath{\mbox{\boldmath $k$}}}
  
  \newcommand{\vect}[1]{\mbox{\boldmath $#1$}}
  \newcommand{\bomega}{\mbox{\boldmath $\omega$}}

       \hyphenation{di-men-sion-al}
       \hyphenation{phys-ics}

\begin{document}
%
%
%
%
%
\submitted{April 22, 2008}
\revised{May 9, 2008}
%
%
%
%
\title[Directional Uncertainty Principle for QFT]
 {Directional Uncertainty Principle  for Quaternion Fourier Transform}
\author[E. Hitzer]{Eckhard M. S. Hitzer}

\address{%
Department of Applied Physics\br
University of Fukui\br
910-8507 Fukui\br
Japan
}

\email{hitzer@mech.fukui-u.ac.jp}

\thanks{In memory of our dear friend Hiroshi Matsushita.}

\subjclass{Primary   11R52; 
           Secondary 42A38, 15A66,  83A05, 35L05}


\keywords{Geometric algebra, quaternions, uncertainty, multivector wave packets,
          spacetime algebra}

\date{}
\dedicatory{Soli Deo Gloria}

\begin{abstract}
This paper derives a new directional uncertainty principle
for quaternion valued functions subject to the quaternion Fourier transformation.
This can be generalized to establish directional uncertainty principles in
Clifford geometric algebras with quaternion subalgebras. 
We demonstrate this with the example of a directional spacetime algebra function 
uncertainty principle related to multivector wave packets.
\end{abstract}

\maketitle

\section{Introduction}

The Heisenberg uncertainty principle and quaternions are both fundamental
for quantum mechanics, including the spin of elementary particles. 
The quaternion Fourier transform (QFT)~\cite{TAE:QFT, EH:QFTgen} is used
in image and signal processing. It allows to formulate a component wise
uncertainty principle~\cite{MHHA:QUPcomp, MHAV:WQFT, TB:thesis}.

In Clifford geometric algebras, which generalize real and complex 
numbers and quaternions to higher dimensions, the vector differential
allows to formulate a more general directional uncertainty principle~\cite{MH:CFT,HM:ICCA7}.
The present paper formulates a directional uncertainty principle for 
quaternion functions. As prerequisite for its proof we further
investigate the split of quaternions introduced in~\cite{EH:QFTgen,SG:RQC}. 

Then we show how the generalization of the QFT to a spacetime algebra (STA) 
Fourier transform (SFT)~\cite{EH:QFTgen}
allows us to also generalize the directional QFT uncertainty principle
to STA. There the quaternion split corresponds to the relativistic split of spacetime
into time and space. The split of the SFT corresponds then to analyzing a spacetime
multivector function in terms of left and right travelling multivector wave packets~\cite{EH:QFTgen}.
We will see that the energies of these wave packets determine 
together with two arbitrary spacetime directions
(one spacetime vector and one relativistic wave vector) the resulting uncertainty threshold.

\section{Definition and properties of quaternions $\mathbb{H}$}

\subsection{Basic facts about quaternions}

Gauss, Rodrigues and Hamilton's four-dimensional (4D) quaternion algebra $\HQ$ is defined over $\R$ 
with three imaginary units:

\be
 \qi \qj = -\qj \qi = \qk, \,\,
 \qj \qk = -\qk \qj = \qi, \,\,
 \qk \qi = -\qi \qk = \qj, \,\, 
 \qi^2=\qj^2=\qk^2=\qi \qj \qk = -1.
\label{eq:quat}
\end{equation}
Every quaternion can be written explicitly as
\be
  q=q_r + q_i \qi + q_j \qj + q_k \qk \in \HQ, \quad 
  q_r,q_i, q_j, q_k \in \R,
  \label{eq:aquat}
\end{equation}
and has a \textit{quaternion conjugate} (equivalent to reversion in $Cl_{3,0}^+$)
\be
  \tilde{q} = q_r - q_i \qi - q_j \qj - q_k \qk.
\end{equation}
This leads to the \textit{norm} of $q\in\HQ$
\be
  | q | = \sqrt{q\tilde{q}} = \sqrt{q_r^2+q_i^2+q_j^2+q_k^2},
  \qquad
  | p q | = | p || q |.
\end{equation}
The scalar part of a quaternion is defined as
\be
  Sc(q) = q_r = \frac{1}{2}(q+\tilde{q}).
\ee

\subsection{The $\pm$ split of quaternions}

A convenient \textit{split}~\cite{EH:QFTgen} of quaternions is 
defined by
\begin{gather}
  q = q_+ + q_-, \quad q_{\pm} = \frac{1}{2}(q\pm \qi q \qj).
  \label{eq:pmform}
\end{gather}
Explicitly in real components
$q_r,q_i, q_j, q_k \in \R$ using (\ref{eq:quat}) we get
\be
  q_{\pm} = \{q_r\pm q_k + \qi(q_i\mp q_j)\}\frac{1\pm \qk}{2}
       = \frac{1\pm \qk}{2} \{q_r\pm q_k + \qj(q_j\mp q_i)\}.
  \label{eq:qpm}
\end{equation}
This leads to the following new \textit{modulus identity}
\begin{lem}[Modulus identity]
  \label{lm:modid}
For $q \in \mathbb{H}$
\be
  \label{eq:modid}
  |q|^2 = |q_-|^2 + |q_+|^2.
\ee
\end{lem}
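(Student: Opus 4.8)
The plan is to read the two summands straight off the factored real-component form (\ref{eq:qpm}) and to exploit the multiplicativity of the quaternion norm, $|pq|=|p||q|$, rather than expanding any products in full. Setting $a=q_r+q_k$ and $b=q_i-q_j$, the first line of (\ref{eq:qpm}) exhibits $q_+$ as the product of the ``complex'' quaternion $a+\qi b$ (living in the real span of $1$ and $\qi$) with the factor $\tfrac{1+\qk}{2}$; symmetrically $q_-=(a'+\qi b')\tfrac{1-\qk}{2}$ with $a'=q_r-q_k$ and $b'=q_i+q_j$.

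First I would compute the norms of the two factors separately: for real $a,b$ one has $|a+\qi b|^2=a^2+b^2$, and a one-line check gives $\bigl|\tfrac{1\pm\qk}{2}\bigr|^2=\tfrac12$. Multiplicativity of the norm then yields
\be
  |q_\pm|^2 = \tfrac12\bigl((q_r\pm q_k)^2+(q_i\mp q_j)^2\bigr).
\ee
Adding the two expressions and expanding the squares, the mixed terms $\pm 2q_rq_k$ and $\mp 2q_iq_j$ cancel between the $+$ and $-$ contributions, leaving $q_r^2+q_k^2+q_i^2+q_j^2$, which is exactly $|q|^2$. This establishes (\ref{eq:modid}).

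An alternative, more conceptual route is to expand $|q|^2=q\tilde q=(q_++q_-)(\tilde q_++\tilde q_-)$ directly; since $q_-\tilde q_+=\widetilde{q_+\tilde q_-}$, the cross terms combine to $2\,Sc(q_+\tilde q_-)$, so the identity is equivalent to the orthogonality statement $Sc(q_+\tilde q_-)=0$. The only thing to watch in either approach is the non-commutative bookkeeping -- in particular keeping the signs straight through $\qi\qk=-\qj$ when handling the factors $\tfrac{1\pm\qk}{2}$ -- but once the factorisation (\ref{eq:qpm}) is in hand the remaining steps are routine, so I anticipate no serious obstacle.
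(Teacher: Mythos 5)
Your main argument is correct and is essentially the paper's own proof: both read $|q_\pm|^2=\tfrac12\bigl((q_r\pm q_k)^2+(q_i\mp q_j)^2\bigr)$ off the factorisation \eqref{eq:qpm} using $\bigl|\tfrac{1\pm\qk}{2}\bigr|^2=\tfrac12$ and multiplicativity of the norm, then add and cancel the cross terms. Your alternative remark, reducing the identity to $Sc(q_+\tilde q_-)=0$, is a nice observation that ties it to Lemma~\ref{lm:smxprod}, but it is not needed.
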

\begin{proof}
Using \eqref{eq:qpm} we get for the right side of \eqref{eq:modid}
\begin{gather}
  |q_-|^2 + |q_+|^2 
  = \frac{1}{2}[(q_r+q_k)^2 + (q_i-q_j)^2 + (q_r-q_k)^2 + (q_i+q_j)^2]
  \nonumber
  \\
  = \frac{1}{2}[2q_r^2 + 2q_k^2 + 2q_i^2 + 2q_j^2] 
  = |q|^2 ,
\end{gather}
because
\be
  \left|\qi\frac{1\pm \qk}{2}\right|^2 = \left|\frac{1\pm \qk}{2}\right|^2 = \frac{1}{2}\,.
\ee
\end{proof}

We can further derive the following useful split product identities.

\begin{lem}[Scalar part of mixed split product]\label{lm:smxprod}
  Given two quaternions $p,q$ and applying the $\pm$ split we get zero for the
  scalar part of the mixed products
  \be
     Sc(p_+\widetilde{q}_-) = 0, \qquad Sc(p_-\widetilde{q}_+) = 0 .
  \ee
\end{lem}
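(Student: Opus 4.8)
The plan is to avoid grinding through the real components and instead exploit the fact that the split \eqref{eq:pmform} is an \emph{eigenprojection}. First I would record the elementary observation that the map $q \mapsto \qi q \qj$ is an involution: applying it twice gives $\qi(\qi q \qj)\qj = \qi^2 q \qj^2 = q$ by \eqref{eq:quat}. Consequently $q_\pm = \frac{1}{2}(q \pm \qi q \qj)$ are its $\pm 1$ eigenvectors, i.e.
\be
  \qi\, q_\pm\, \qj = \pm\, q_\pm ,
\ee
which is immediate from the definition. The only two further tools I need are the cyclicity of the scalar part, $Sc(xy) = Sc(yx)$, which is standard on $\HQ$, and the behaviour of the eigen-relation under conjugation.

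Next I would transport the eigen-relation through the quaternion conjugation. Since conjugation is an anti-automorphism with $\widetilde{\qi} = -\qi$ and $\widetilde{\qj} = -\qj$, conjugating both sides of $\qi q_\pm \qj = \pm q_\pm$ and using $\widetilde{ab} = \widetilde{b}\,\widetilde{a}$ converts the left-and-right multiplications into the reversed order $\widetilde{\qj}\,\widetilde{q}_\pm\,\widetilde{\qi} = \qj\,\widetilde{q}_\pm\,\qi$, so that
\be
  \qj\, \widetilde{q}_\pm\, \qi = \pm\, \widetilde{q}_\pm .
\ee

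With these facts the result is a one-line sign argument. For the first identity I would use the eigen-relation for $p_+$, then cyclicity to move the leading $\qi$ to the back, and finally the conjugated eigen-relation for $\widetilde{q}_-$:
\be
  Sc(p_+ \widetilde{q}_-) = Sc(\qi p_+ \qj\, \widetilde{q}_-) = Sc(p_+\, \qj\, \widetilde{q}_-\, \qi) = Sc(p_+(-\widetilde{q}_-)) = -Sc(p_+\widetilde{q}_-),
\ee
which forces $Sc(p_+\widetilde{q}_-)=0$. The second identity follows identically: now $\qi p_- \qj = -p_-$ supplies the sign while $\qj\, \widetilde{q}_+\, \qi = +\widetilde{q}_+$, and the same manipulation again yields $Sc(p_-\widetilde{q}_+) = -Sc(p_-\widetilde{q}_+) = 0$.

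I do not expect a genuine obstacle; the single point demanding care is the bookkeeping in the middle step, where the anti-automorphism reverses the order of the factors and flips the signs of $\qi$ and $\qj$, turning the left relation $\qi(\cdot)\qj$ into the right relation $\qj(\cdot)\qi$. Getting that reversal right is exactly what makes the cyclic-shift step produce the opposite sign and hence the vanishing. As an independent check one could instead substitute the explicit component form \eqref{eq:qpm} for $p_\pm$ and $q_\mp$ and verify that the cross terms cancel, but the eigenprojection argument is cleaner and makes transparent \emph{why} the mixed scalar parts vanish.
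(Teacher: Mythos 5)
Your proof is correct, and it takes a genuinely different route from the paper's. The paper substitutes the explicit component form \eqref{eq:qpm} of $p_+$ and $\widetilde{q}_-$, collapses the middle factors via $\frac{1+\sqk}{2}\frac{1+\sqk}{2}=\frac{\sqk}{2}$, and then reads off that the scalar part of the resulting product vanishes; it is short but coordinate-bound. You instead observe that $q\mapsto \qi q\qj$ is an involution whose $\pm1$ eigenspaces are exactly the split, so $\qi q_{\pm}\qj=\pm q_{\pm}$, transport this through quaternion conjugation to $\qj\,\widetilde{q}_{\pm}\,\qi=\pm\widetilde{q}_{\pm}$, and use the symmetry $Sc(xy)=Sc(yx)$ to force $Sc(p_+\widetilde{q}_-)=-Sc(p_+\widetilde{q}_-)$. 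All three ingredients check out: the eigenrelation follows directly from \eqref{eq:pmform} and $\qi^2=\qj^2=-1$, conjugation is indeed an anti-automorphism fixing the signs as you state, and the cyclic step is legitimate because $Sc(xy)=x_ry_r-x_iy_i-x_jy_j-x_ky_k$ is symmetric in $x$ and $y$. What your argument buys is that it is coordinate-free, it explains the vanishing conceptually as orthogonality of the $+1$ and $-1$ eigenspaces of an involution with respect to the bilinear form $(p,q)\mapsto Sc(p\widetilde{q})$, and it transfers essentially verbatim to the spacetime split \eqref{eq:fpmsd} used later in the paper, where a component-by-component verification over the $16$-dimensional algebra $\G_{3,1}$ would be far more laborious. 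The paper's computation, by contrast, needs no auxiliary facts about $Sc$ beyond its definition, at the cost of obscuring why the cross terms cancel.
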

\begin{proof}
We only prove the first identity. The second works analogous. 
\begin{gather}
  Sc(p_+\widetilde{q}_-) 
  = Sc(
       \{ p_r+ p_k + \qi( p_i- p_j)\} \frac{1+ \qk}{2}\frac{1+\qk}{2}
       \{ q_r- q_k - \qi( q_i+ q_j)\} 
      )
  \nonumber \\
  =  Sc(
       \{ p_r+ p_k + \qi( p_i- p_j)\} \frac{\qk}{2}
       \{ q_r- q_k - \qi( q_i+ q_j)\}
       )
  \nonumber \\
  =  \frac{1}{2}Sc(
       \{ p_r+ p_k + \qi( p_i- p_j)\} 
       \{ (q_r- q_k)\qk - \qj( q_i+ q_j)\}
       )
  =  0 , 
\end{gather}
where we used
\be
  \frac{1+ \qk}{2}\frac{1+\qk}{2} =\frac{\qk}{2}.
\ee
\end{proof}

\section{Overview of Fourier transforms (FT)}

We first give a brief overview of some complex and hypercomplex Fourier transforms.

\subsection{Complex, Clifford, quaternion, spacetime Fourier transforms}

The classical complex Fourier transformation (FT) is defined as
\begin{equation}
  \mathcal{F}_{\mathbb{C}} \{ f \} ( \omega ) 
  = \int_{\R} f(x) \,e^{-i \omega x}\, dx,
\end{equation}
where $f\in L^1(\mathbb{R},\mathbb{C})$, and $\omega,x \in \mathbb{R}$.

Brackx et al.~\cite{BDS:CA} extended the Fourier transform to multivector valued 
function-distributions in $Cl_{0,n}$ with compact support. 
A related applied approach for hypercomplex Clifford Fourier 
transformations in $Cl_{0,n}$ was followed by B\"{u}low et. al.~\cite{GS:ncHCFT}.

By extending the classical trigonometric exponential function 
$\exp(j \, \mbox{\boldmath $x$} \ast \mbox{\boldmath $\xi $} )$ (where $\ast $ denotes the 
scalar product of $\mbox{\boldmath $x$}\in \R^m$ with $\mbox{\boldmath $\xi $}\in \R^m$,
$j$ the imaginary unit) 
in~\cite{LMQ:CAFT94,AM:CAFT96}, McIntosh et. al. generalized the classical
Fourier transform. Applied to a function of $m$ real variables this generalized
Fourier transform is holomorphic in $m$ complex variables and its inverse is 
\textit{monogenic}
in $m+1$ real variables, thereby effectively extending the function of $m$ real
variables to a monogenic function of $m+1$ real variables 
(with values in a \textit{complex} Clifford algebra). 
This generalization has significant applications to harmonic analysis,
especially to singular integrals on surfaces in $\R^{m+1}$. 
Based on this approach Kou and Qian obtained a Clifford Payley-Wigner theorem
and derived Shannon interpolation of band-limited functions using the monogenic
sinc function~\cite[and references therein]{TQ:PWT}. 
The Clifford Payley-Wigner theorem also allows to derive left-entire 
(left-monogenic in the whole $\R^{m+1}$) functions from square integrable
functions on $\R^{m}$ with compact support. 

The real $n$-dimensional volume element
$i_n = \mbox{\boldmath $e$}_{1}\mbox{\boldmath $e$}_{2} \ldots \mbox{\boldmath $e$}_{n}$ 
of $\G_n=Cl_{n,0}$ over the field
of the reals $\R$ has been used in \cite{ES:CFTonVF,MH:CFT,EM:CFaUP,HM:ICCA7} 
to construct and apply Clifford Fourier 
transformations for $n=2,3 \, (\rm mod \, 4)$ with kernels 
$\exp(-i_n \vect{x}\ast \bomega),\; \vect{x}, \bomega \in \R^n$. 
This $i_n$ has a clear geometric interpretation. 
Note that $i_n^2=-1$ for $n=2,3 \, (\rm mod \, 4)$.

For $n=3$ the Clifford geometric algebra (GA) FT in $\G_3$ (replacing $i \rightarrow i_3$)
is given by
\begin{equation}
  \label{eq:3DCFT}
  \mathcal{F}_{\G_3}\{f\}(\vec{\omega})
  = \int_{\R^3} {f(\vec{x})
    \,e^{-i_3\vec{\omega}\cdot \vec{x}}\, d^3\vec{x}},
\end{equation}
where $f\in L^1(\R^3,\G_3)$ and $\vec{\omega}, \vec{x} \in \R^3$.
For $n=2,3$ $(\rm mod \, 4)$ the GA FT in $\G_n$ (basically replacing $i \rightarrow i_n$) is
\begin{equation}
  \mathcal{F}_{\G_n}\{f\}(\mbox{\boldmath $\omega$})
  =  \int_{\R^n} f(\mbox{\boldmath $x$})
     \,e^{-i_n\mbox{\boldmath $\omega$}\cdot \mbox{\boldmath $x$}}\, d^n\mbox{\boldmath $x$},
\end{equation}
where $f\in L^1(\R^n,\G_n)$ and $\bomega, \vect{x} \in \R^n$.

Ell \cite{TAE:QFT} defined the quaternion Fourier transform (QFT) 
for application to 2D linear time-invariant systems of PDEs. 
Ell's QFT belongs to the growing family of Clifford Fourier transformations. 
But the left and right placement of the
exponential factors in definition \eqref{eq:EllQFT} distinguishes it. 
Later the QFT was applied extensively to 2D image processing,
including color images \cite{TB:thesis, TAE:QFT, GS:ncHCFT}. 
This spurred research into optimized numerical 
implementations \cite{MF:thesis, PDC:effQFT}. 

The (double sided form of the) QFT in $\mathbb{H}$ (replacing $i \rightarrow \qi, \qj$)
is commonly defined as
\begin{equation}
  \label{eq:EllQFT}
  \mathcal{F}_{\mathbb{H}}\{ f \}(\bomega)
  = \hat{f}(\bomega) 
  = \int_{\R^2} e^{-\sqi x_1\omega_1} f(\vect{x}) \,e^{-\sqj x_2\omega_2} d^2\vect{x},
\end{equation}
where $f\in L^1(\R^2,\mathbb{H})$, $d^2\vect{x} = dx_1dx_2$ and $\vect{x}, \bomega \in \R^2 $.

Ell \cite{TAE:QFT} and others \cite{TB:thesis, CCZ:comQuat} also
investigated related \textit{commutative} hypercomplex Fourier transforms 
like in the commutative subalgebra of $\G_{4}$ with subalgebra basis 
$\{1, \vect{e}_{12}, \vect{e}_{34}, \vect{e}_{1234}\}$, 
\be
  \vect{e}_{12}^2=\vect{e}_{34}^2=-1, \quad \vect{e}_{1234}^2=+1\,\,.
\end{equation}

A higher dimensional generalization~\cite{EH:QFTgen} of the QFT to the spacetime algebra $\G_{3,1}$ is
the spacetime FT (SFT)  
(replacing the quaternion units by $\qi \rightarrow \vect{e}_t$, $\qj \rightarrow i_3$)
will be explained and applied in section \ref{sc:SFT}.

\section{Uncertainty principle}

\subsection{The Heisenberg uncertainty principle in physics}

Photons (quanta of light) scattering off particles to be detected cause 
minimal position momentum uncertainty during detection.
The uncertainty principle has played a fundamental role in the development and 
understanding of quantum physics. 
It is also central for information processing~\cite{JR:HWI}. 

In quantum physics it states e.g. 
that particle momentum and position cannot be simultaneously measured with
arbitrary precision. 
The hypercomplex (quaternion or more general multivector) function $f(\mbox{\boldmath $x$})$ 
would represent the 
spatial part of a separable wave function and its Fourier transform (QFT, CFT, SFT, ...)
$\mathcal{F}\{ f \}(\mbox{\boldmath $\omega$})$ the same wave function
in momentum space (compare~\cite{DL:GAfP,FS:QM,HW:GQM}). 
The variance in space $\R^n$ (in physics often $n=3$) would then be calculated as 
($1\leq k \leq n$)
$$
  (\Delta x_k)^2
  =\int_{\R^n} \hspace{-0.5mm} 
  \langle f(\mbox{\boldmath $x$})
  (\mbox{\boldmath $e$}_k \cdot \mbox{\boldmath $x$})^2
  \tilde{f}(\mbox{\boldmath $x$})\rangle \, d^n \mbox{\boldmath $x$}
  =
  \int_{\R^n} \hspace{-0.5mm} (\mbox{\boldmath $e$}_k \cdot \mbox{\boldmath $x$})^2
  |f(\mbox{\boldmath $x$})|^2 \, d^n \mbox{\boldmath $x$},
$$
where it is customary to set without loss of generality the mean value of 
$\mbox{\boldmath $e$}_k \cdot \mbox{\boldmath $x$}$ to zero~\cite{HW:GQM}. 
The variance in momentum space would be calculated as ($1\leq l \leq n$)
\begin{eqnarray*}
  (\Delta\omega_l)^2  
  & = & \frac{1}{(2\pi)^n} 
  \int_{\R^n} \hspace{-0.5mm} 
  \langle \mathcal{F}\{f\}(\mbox{\boldmath $\omega$})
  (\mbox{\boldmath $e$}_l \cdot \mbox{\boldmath $\omega$})^2
  \widetilde{\mathcal{F}}\{f\}(\mbox{\boldmath $\omega$})\rangle \, 
  d^n \mbox{\boldmath $\omega$} \\
  & = &
  \frac{1}{(2\pi)^n}
  \int_{\R^n}\hspace{-0.5mm}(\mbox{\boldmath $e$}_l\cdot\mbox{\boldmath $\omega$})^2\;
  |\mathcal{F}\{f\}(\mbox{\boldmath $\omega$})|^2 d^n\mbox{\boldmath $\omega$}.
\end{eqnarray*}
Again the mean value of $\mbox{\boldmath $e$}_l \cdot \mbox{\boldmath $\omega$}$ is
customarily set to zero, it merely corresponds to a phase shift~\cite{HW:GQM}.
Using our mathematical units, the position-momentum uncertainty relation of quantum mechanics
is then expressed by (compare e.g. with (4.9) of~\cite[page 86]{FS:QM})
\begin{equation}
  \label{eq:xpUP}
  \Delta x_k \Delta\omega_l = \frac{1}{2}\,\delta_{k,l} F,
\end{equation}
where $\delta_{k,l}$ is the usual Kronecker symbol. Note that we have 
not normalized the squares of the variances by division with 
$F= \int_{\R^n} | f(\mbox{\boldmath $x$})|^2 \; d^n \mbox{\boldmath $x$}$,
therefore the extra factor $F$ on the right side of (\ref{eq:xpUP}).
Further explicit examples
from image processing can be found in~\cite{MF:thesis}.

In general in Fourier analysis such conjugate entities correspond to the variances
of a function and its Fourier transform which cannot both 
be simultaneously sharply localized (e.g.~\cite{JR:HWI,JC:UP}).  
Material on the classical uncertainty principle for the general case of $L^2(\R^n)$ 
without the additional condition $\lim_{|x|\rightarrow \infty}|x|^2|f(x)|=0$
can be found in~\cite{SM:WVT} and~\cite{KG:FTFA}.
Felsberg~\cite{MF:thesis} even notes for two dimensions: \textit{In 2D
however, the uncertainty relation is still an open problem. In~\cite{GK:SPfCV} 
it is stated that
there is no straightforward formulation for the 2D uncertainty relation.}

Let us now briefly define the notion of \textit{directional} uncertainty principle
in Clifford geometric algebra.

\subsection{Directional uncertainty principle in Clifford geometric algebra}

From the view point of Clifford geometric algebra an uncertainty 
principle  gives us information about how the variance of a multivector valued function 
and the variance of its Clifford Fourier transform are related. We can shed the
restriction to the parallel ($k=l$) and orthogonal ($k\neq l$) cases of (\ref{eq:xpUP})
by looking at the $\mbox{\boldmath $x$}\in \R^n$ variance in an arbitrary but
fixed direction $\mbox{\boldmath $a$}\in \R^n$ and 
at the $\mbox{\boldmath $\omega$}\in \R^n$ variance in an arbitrary but fixed direction 
$\mbox{\boldmath $b$}\in \R^n$. 
We are now concerned with
Clifford geometric algebra multivector functions
$f: \R^n \rightarrow \G_n,\, n=2,3$ $(\rm mod \, 4)$ with
GA FT $\mathcal{F}\{ f \}(\mbox{\boldmath $\omega$})$ such that
\be
  \int_{\R^n} | f(\mbox{\boldmath $x$})|^2 \,d^n \mbox{\boldmath $x$} = F <\infty .
\ee
The directional uncertainty principle~\cite{MH:CFT,HM:ICCA7} with arbitrary constant vectors 
\mbox{\boldmath $a$}, \mbox{\boldmath $b$ $\in \mathbb{R}^n$}
means that
\begin{equation}
  \label{eq:CFTdirUP}
  \frac{1}{F}{\int_{\R^n}\hspace{-0.5mm}(\mbox{\boldmath $a$}\cdot\mbox{\boldmath $x$})^2
  |f(\mbox{\boldmath $x$})|^2 \, d^n \mbox{\boldmath $x$} 
  \, 
  {\frac{1}{(2\pi)^n F}\int_{\R^n}\hspace{-0.5mm}(\mbox{\boldmath $b$}\cdot\mbox{\boldmath $\omega$})^2\;
  |\mathcal{F}\{f\}(\mbox{\boldmath $\omega$})|^2 d^n\mbox{\boldmath $\omega$}} }
  \,\geq \,
  \frac{(\mbox{\boldmath $a$}\cdot\mbox{\boldmath $b$})^2}{4}.
\end{equation}
Equality (minimal bound) in \eqref{eq:CFTdirUP} is achieved for optimal  \textit{Gaussian} multivector functions
\begin{equation}
  \label{eqf}
  f(\mbox{\boldmath $x$}) = C_0\; e^{-\alpha\;\mbox{\boldmath $x$}^2},
\end{equation}
$C_0 \in \G_n$ arbitrary const. multivector, $0 < \alpha \in \R.$

So far there seems to be no directional uncertainty principle for quaternion functions
over $\R^2$ subject to the QFT. But there already exists a component wise uncertainty principle.

\subsection{Component wise uncertainty principle for right sided QFT}

For the right sided QFT $\mathcal{F}_{r}\{f\}$: $\mathbb{R}^2 \rightarrow \mathbb{H}$  
of $f \in L^1(\mathbb{R}^2;\mathbb{H})$
given by~\cite{MHHA:QUPcomp, TB:thesis}
\begin{equation}\label{eq:QFTrdef}
  \mathcal{F}_{r}\{f\}(\mbox{\boldmath $\omega$}) 
  = \int_{\mathbb{R}^2} f(\mbox{\boldmath $x$}) e^{-\mbox{\boldmath $i$}\omega_1 x_1}
    e^{-\mbox{\boldmath $j$}\omega_2 x_2}\,
    d^2\mbox{\boldmath $x$},
\end{equation}
where $\mbox{\boldmath $x$}=x_1\mbox{\boldmath $e$}_1 +x_2\mbox{\boldmath
$e$}_2$, 
$\mbox{\boldmath $\omega $}=\omega _1\mbox{\boldmath $e$}_1 +\omega
_2\mbox{\boldmath $e$}_2$, and the quaternion exponential product
$e^{-\mbox{\boldmath $i$}\omega_1 x_1}
e^{-\mbox{\boldmath $j$}\omega_2 x_2}$ is the quaternion Fourier \textit{kernel},
it is possible to establish a component uncertainty principle
($k=1,2$)
\be
  \int_{\R^2}x_k^2|f(\vect{x})|^2d^2\vect{x}\,
  \int_{\R^2}\omega_k^2|\mathcal{F}_{r}\{f\}(\bomega)|^2d^2\bomega
  \geq
  \frac{(2\pi)^2}{4}
  \left\{\int_{\R^2}|f(\vect{x})|^2d^2\vect{x} \right\}^2,
\ee
where $f \in L^2(\mathbb{R}^2; \mathbb{H})$ is now a quaternion-valued signal 
such that both $(1 + | x_k|) f(\mbox{\boldmath $x$}) \in L^2(\mathbb{R}^2;\mathbb{H})$ and
$ \frac{\partial}{\partial x_k} f(\mbox{\boldmath $x$}) \in L^2(\mathbb{R}^2;\mathbb{H})$.
It is further possible to prove~\cite{MHHA:QUPcomp} that equality holds if and only if $f$ 
is a Gaussian quaternion function
\be
   f(\vect{x}) = C_0 e^{-\alpha_1 x_1^2 -\alpha_2 x_2^2}, \qquad  
   \mbox{ const. } C_0 \in \HQ,
   \qquad
   0<\alpha_1,\alpha_2 \in \R.
\ee
\begin{rem}
Now we want to address the very important question:
  Is it also possible to establish a full \textit{directional}
  uncertainty principle for the QFT? 
\end{rem}
In the following we will try to answer this question
after investigating some relevant properties of the \textit{double sided} QFT,
which is slightly different from  \eqref{eq:QFTrdef}.

\section{Quaternion Fourier transform (QFT)}

From now on we only consider the double sided QFT 
$\mathcal{F}\{f\}$: $\mathbb{R}^2 \rightarrow \mathbb{H}$  
of $f \in L^1(\mathbb{R}^2;\mathbb{H})$ in the form
\be
  \label{eq:QFTdef}
  \mathcal{F}\{f\}(\bomega)
  = \hat{f}(\bomega) 
  = \int_{\R^2} e^{-\sqi x_1\omega_1} f(\vect{x}) \,e^{-\sqj x_2\omega_2} d^2\vect{x}.
\end{equation}
Linearity allows to also split up the QFT itself as
\be
  \mathcal{F}\{f\}(\vect{\bomega})
  = \mathcal{F}\{f_- + f_+\}(\vect{\bomega})
  = \mathcal{F}\{f_-\}(\vect{\bomega})
    + \mathcal{F}\{ f_+\}(\vect{\bomega}).
\ee

\subsection{Simple complex forms for QFT of $f_{\pm}$}

\label{th:fpmtrafo}
The QFT of the $f_{\pm}$ split parts of a quaternion function 
$f \in L^2(\R^2,\HQ)$ have simple {complex forms}~\cite{EH:QFTgen}
\begin{equation}
 \hat{f}_{\pm} 
  \stackrel{}{=} \int_{\R^2}
    f_{\pm}e^{-\sqj (x_2\omega_2 \mp x_1\omega_1)}d^2x
  \stackrel{}{=} \int_{\R^2}
    e^{-\sqi (x_1\omega_1 \mp x_2\omega_2)}f_{\pm}d^2x \,\, .
\end{equation}

We can rewrite this free of coordinates as
\begin{equation}
 \hat{f}_{-} 
  \stackrel{}{=} \int_{\R^2}
    f_{-}e^{-\sqj \, \vect{x} \cdot \bomega}d^2x,
 \quad
 \hat{f}_{+} 
  \stackrel{}{=} \int_{\R^2}
    f_{+}e^{-\sqj \, \vect{x} \cdot ({\mathcal U}_{1}\bomega)}d^2x,
\end{equation}
where the reflection ${\mathcal U}_{1}\bomega$ 
changes component $\omega_1 \rightarrow -\omega_1$. That 
this applies to $\omega_1$ is due to the choice of the kernel in \eqref{eq:QFTdef}.

\subsection{Preparations for the full directional QFT uncertainty principle}

Now we continue to lay the ground work for the full directional QFT uncertainty principle.
We begin with the following lemma.

\begin{lem}[Integration of parts]\label{lm:iop}
With the vector differential $\vect{a}\cdot \nabla = a_1 \partial_1+a_2\partial_2$,
with arbitrary constant $\vect{a} \in \R^2$, $g,h\in L^2(\R^2,\HQ)$
\be
  \int_{\R^2} g(\vect{x})[\vect{a}\cdot \nabla h(\vect{x})]d^2\vect{x}
  = \left[ \int_{\R} g(\vect{x})h(\vect{x})d\vect{x}\right]_{a\cdot x = -\infty}^{a\cdot x=\infty}
    - \int_{\R^2} [\vect{a}\cdot \nabla g(\vect{x})]h(\vect{x})d^2\vect{x}.
\ee
\end{lem}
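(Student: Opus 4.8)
The plan is to reduce the statement to the ordinary Leibniz rule together with the fundamental theorem of calculus, the only novelty being that the non-commutativity of $\HQ$ forces us to keep $g$ always on the left and $h$ always on the right. First I would record the ordered product rule for the directional derivative,
\be
  \vect{a}\cdot\nabla(gh) = [\vect{a}\cdot\nabla g]\,h + g\,[\vect{a}\cdot\nabla h].
\ee
This holds verbatim in this ordered form: writing $g$ and $h$ in components over the real basis $\{1,\qi,\qj,\qk\}$, the operator $\vect{a}\cdot\nabla = a_1\partial_1 + a_2\partial_2$ is $\R$-linear and differentiates only the real coefficient functions, so every basis product rides along unchanged and no factors are reordered.

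Next I would integrate this identity over $\R^2$ and rearrange,
\be
  \int_{\R^2} g\,[\vect{a}\cdot\nabla h]\,d^2\vect{x}
  = \int_{\R^2} \vect{a}\cdot\nabla(gh)\,d^2\vect{x}
    - \int_{\R^2} [\vect{a}\cdot\nabla g]\,h\,d^2\vect{x},
\ee
so that the second term on the right already matches the last term of the claim. It then remains to identify $\int_{\R^2}\vect{a}\cdot\nabla(gh)\,d^2\vect{x}$ with the boundary expression. For this I would pass to orthonormal coordinates adapted to $\vect{a}$, with one coordinate measured along $\hat{\vect{a}}=\vect{a}/|\vect{a}|$ and the other along the perpendicular direction, so that $\vect{a}\cdot\nabla$ becomes $|\vect{a}|$ times differentiation along $\hat{\vect{a}}$ while the area element is preserved. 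The fundamental theorem of calculus in the $\hat{\vect{a}}$ coordinate then collapses the plane integral to an evaluation on the lines $\vect{a}\cdot\vect{x}=\pm\infty$, leaving exactly the bracketed boundary term $\big[\int_{\R} gh\,d\vect{x}\big]_{a\cdot x=-\infty}^{a\cdot x=\infty}$.

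The routine parts are this coordinate change and the bookkeeping of the factor $|\vect{a}|$, which is absorbed into the measure of the boundary line. The genuine point of care is analytic rather than algebraic: for merely $L^2$ data the pointwise limits of $gh$ as $\vect{a}\cdot\vect{x}\to\pm\infty$ and the iterated single-variable integrals need not exist, so I would carry out the computation under the implicit regularity that renders the boundary integral meaningful (absolute continuity of $gh$ along $\hat{\vect{a}}$ and integrability of $\vect{a}\cdot\nabla(gh)$), which is precisely the setting in which this lemma is later applied. For the rapidly decaying Gaussian-type functions relevant to the uncertainty estimate the boundary term in fact vanishes outright, so the lemma reduces to the clean statement $\int_{\R^2} g\,[\vect{a}\cdot\nabla h]\,d^2\vect{x} = - \int_{\R^2} [\vect{a}\cdot\nabla g]\,h\,d^2\vect{x}$.
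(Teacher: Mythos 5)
Your argument is correct and is essentially the same one the paper relies on: the paper itself omits the proof, deferring to the integration-by-parts lemma in \cite{MH:CFT}, whose proof is exactly the ordered (non-commutative) Leibniz rule followed by the fundamental theorem of calculus in coordinates adapted to $\vect{a}$. Your added remarks on the implicit regularity needed beyond $L^2$ and on the bookkeeping of $|\vect{a}|$ in the boundary term are appropriate and do not change the substance.
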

\begin{rem}
The proof of Lemma \ref{lm:iop} works very similar to the proof of integration of parts in~\cite{MH:CFT}.
We therefore don't repeat it here.
\end{rem}
For a quaternion function $f$ and its QFT $\mathcal{F}\{f\}$ itself we also get important modulus identities.
\begin{lem}[Modulus identities]
Due to $|q|^2 = |q_-|^2 + |q_+|^2$ of Lemma \ref{lm:modid} 
we get for $f:\R^2\rightarrow \HQ$ the following identities
\be
  |f(\vect{x})|^2 = |f_-(\vect{x})|^2 + |f_+(\vect{x})|^2,
\ee
\be
  |\mathcal{F}\{f\}(\bomega)|^2 
  = |\mathcal{F}\{f_-\}(\bomega)|^2 
  + |\mathcal{F}\{f_+\}(\bomega)|^2.
\ee
\end{lem}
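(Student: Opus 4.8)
The first identity requires essentially no work: the $\pm$ split acts pointwise, so that $f_{\pm}(\vect{x}) = (f(\vect{x}))_{\pm}$ for each fixed $\vect{x}$. Applying the scalar modulus identity of Lemma~\ref{lm:modid} to the quaternion $q = f(\vect{x})$ at each $\vect{x}$ then gives $|f(\vect{x})|^2 = |f_-(\vect{x})|^2 + |f_+(\vect{x})|^2$ directly, with no integration involved.

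For the second identity the plan is to reduce it to the same pointwise modulus identity, applied this time to the quaternion $q = \mathcal{F}\{f\}(\bomega)$ at each fixed $\bomega$. For that reduction to succeed I must first establish the crucial fact that the $\pm$ split commutes with the QFT, i.e. that $\mathcal{F}\{f_{\pm}\}(\bomega) = (\mathcal{F}\{f\}(\bomega))_{\pm}$. Once this is available, Lemma~\ref{lm:modid} immediately yields $|\mathcal{F}\{f\}(\bomega)|^2 = |(\mathcal{F}\{f\}(\bomega))_-|^2 + |(\mathcal{F}\{f\}(\bomega))_+|^2 = |\mathcal{F}\{f_-\}(\bomega)|^2 + |\mathcal{F}\{f_+\}(\bomega)|^2$, which is the claim.

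To prove the commutation I would start from the split definition $(\mathcal{F}\{f\})_{\pm} = \tfrac12(\mathcal{F}\{f\} \pm \qi\,\mathcal{F}\{f\}\,\qj)$ and pull the outer unit factors $\qi$ and $\qj$ inside the defining integral~\eqref{eq:QFTdef}. The key observation is that the left kernel factor $e^{-\sqi x_1\omega_1}$ is built only from $1$ and $\qi$, hence commutes with $\qi$, while the right kernel factor $e^{-\sqj x_2\omega_2}$ is built only from $1$ and $\qj$, hence commutes with $\qj$. Thus $\qi$ slides rightward past the left exponential and $\qj$ slides leftward past the right exponential, leaving $\qi f(\vect{x})\qj$ sandwiched between the two untouched kernels; averaging produces $\tfrac12(f \pm \qi f \qj) = f_{\pm}$ under the integral sign, which is exactly $\mathcal{F}\{f_{\pm}\}(\bomega)$.

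The only genuine subtlety — and the step I expect to need the most care — is this commutation of each unit quaternion with its own kernel exponential; everything else is bookkeeping and linearity. I would note that the same structural fact already underlies the simple complex forms of $\hat f_{\pm}$ recorded in Section~\ref{th:fpmtrafo}, so the argument is consistent with the transform identities established there. With the commutation in hand, both modulus identities follow from a single application of Lemma~\ref{lm:modid} each, the first pointwise in $\vect{x}$ and the second pointwise in $\bomega$.
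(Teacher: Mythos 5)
Your proposal is correct and follows essentially the same route as the paper, which states the lemma as an immediate consequence of Lemma \ref{lm:modid} without further argument. The one step you rightly single out --- that the $\pm$ split commutes with the QFT, i.e. $\mathcal{F}\{f_{\pm}\}(\bomega)=\left(\mathcal{F}\{f\}(\bomega)\right)_{\pm}$, proved by sliding $\qi$ past $e^{-\sqi x_1\omega_1}$ and $\qj$ past $e^{-\sqj x_2\omega_2}$ --- is exactly the fact the paper leaves implicit (it is also recoverable from the complex forms of $\hat f_{\pm}$ in Section~\ref{th:fpmtrafo}), and your justification of it is sound.
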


We further establish formulas for the vector differentials of the QFTs of the split function parts
$f_-$ and $f_+$.
\begin{lem}[QFT of vector differentials]\label{lm:QFTvdiff}
Using the split $f=f_-+f_+$ we get the QFTs of the split parts. Let $\vect{b}\in \R^2$ 
be an arbitrary constant vector. 
\be
  \mathcal{F}\{\vect{b}\cdot\nabla f_-\}(\bomega) 
  = \vect{b}\cdot \bomega \mathcal{F}\{f_-\}(\bomega) \,\qj,
\ee
\be
    \mathcal{F}\{\vect{b}\cdot\nabla f_+\}(\bomega) 
  = \vect{b}\cdot ({\mathcal U}_{1}\bomega) 
    \,\mathcal{F}\{f_+\}(\bomega) \,\qj,
\ee
\be
    \mathcal{F}\{({\mathcal U}_{1}\vect{b})\cdot\nabla f_+\}(\bomega) 
  = \vect{b}\cdot \bomega \,\mathcal{F}\{f_+\}(\bomega) \,\qj .
\ee
\end{lem}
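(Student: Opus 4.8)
The plan is to prove all three identities by reducing the double-sided QFT of the differentiated split parts to the single-sided \emph{complex forms} of Section~\ref{th:fpmtrafo}, and then integrating by parts to transfer the vector differential onto the exponential kernel. First I would record that the $\pm$ split commutes with the vector differential: since $f_\pm = \tfrac{1}{2}(f \pm \qi f \qj)$ with constant units $\qi,\qj$ and each $\partial_k$ is $\R$-linear, we have $\vect{b}\cdot\nabla f_\pm = (\vect{b}\cdot\nabla f)_\pm$. Hence $\vect{b}\cdot\nabla f_-$ is genuinely a ``$-$ part'' and $\vect{b}\cdot\nabla f_+$ a ``$+$ part'', so the complex forms
\[
  \mathcal{F}\{\vect{b}\cdot\nabla f_-\}(\bomega)
  = \int_{\R^2}(\vect{b}\cdot\nabla f_-)\,e^{-\sqj\,\vect{x}\cdot\bomega}\,d^2x,
  \qquad
  \mathcal{F}\{\vect{b}\cdot\nabla f_+\}(\bomega)
  = \int_{\R^2}(\vect{b}\cdot\nabla f_+)\,e^{-\sqj\,\vect{x}\cdot({\mathcal U}_1\bomega)}\,d^2x
\]
apply verbatim.

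Next I would apply the integration of parts Lemma~\ref{lm:iop} with $g=f_\pm$ and $h$ the relevant exponential, dropping the boundary term (which vanishes because $f\in L^2(\R^2,\HQ)$ together with the standing decay hypotheses forces $f_\pm\to 0$ at infinity). The key computation is that the kernel lies in the $\qj$-plane, so
\[
  \partial_k\,e^{-\sqj\,\vect{x}\cdot\bomega} = -\qj\,\omega_k\,e^{-\sqj\,\vect{x}\cdot\bomega},
  \qquad
  \vect{b}\cdot\nabla\,e^{-\sqj\,\vect{x}\cdot\bomega} = -\qj\,(\vect{b}\cdot\bomega)\,e^{-\sqj\,\vect{x}\cdot\bomega},
\]
and the analogous statement holds with $\bomega$ replaced by ${\mathcal U}_1\bomega$ for the $+$ kernel. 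Substituting, the two minus signs cancel; the real scalar $\vect{b}\cdot\bomega$ is central and may be pulled out, and since $\qj$ commutes with its own exponential the emergent factor $\qj$ can be moved past the kernel to the right of the integral. This yields $\vect{b}\cdot\bomega\,\mathcal{F}\{f_-\}(\bomega)\,\qj$ for the first identity and $\vect{b}\cdot({\mathcal U}_1\bomega)\,\mathcal{F}\{f_+\}(\bomega)\,\qj$ for the second.

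For the third identity I would simply substitute ${\mathcal U}_1\vect{b}$ for $\vect{b}$ in the second and use that ${\mathcal U}_1$ is an orthogonal reflection, whence $({\mathcal U}_1\vect{b})\cdot({\mathcal U}_1\bomega)=\vect{b}\cdot\bomega$, giving $\vect{b}\cdot\bomega\,\mathcal{F}\{f_+\}(\bomega)\,\qj$. The one point demanding genuine care, rather than routine calculation, is the non-commutativity of $\HQ$: every step must keep $f_\pm$ on the left and the kernel on the right, and the final placement of $\qj$ relies essentially on the fact that the single-unit exponentials commute with $\qj$. Confirming the vanishing of the boundary term under the stated hypotheses is the only other item to check.
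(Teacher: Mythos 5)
Your proof is correct and follows essentially the route the paper intends: the paper gives no written-out proof of this lemma, only a remark that the \emph{complex forms} of $\hat{f}_{\pm}$ reduce it to the $n=2$ Clifford Fourier transform case with noncommutativity duly taken into account, which is exactly your reduction to the single-exponential kernels followed by integration by parts and the careful right-placement of the commuting factor $\qj$. The observation that the split is a projection commuting with $\vect{b}\cdot\nabla$, and the use of orthogonality of ${\mathcal U}_1$ for the third identity, are precisely the points that make the argument go through.
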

\begin{rem}
The previously explained \textit{complex forms} of the QFTs
make the proof of Lemma \ref{lm:QFTvdiff} very similar to the case $n=2$ of~\cite{HM:ICCA7}.
Noncommutativity must be duly taken into account!
\end{rem}

\begin{lem}[Schwartz inequality]\label{lm:Swineq}
Two quaternion functions $g,h \in L^2(\R^2,\HQ)$ obey the following
Schwartz inequality
\begin{gather}
  \int_{\R^2}|g(\vect{x})|^2d^2\vect{x}
  \int_{\R^2}|h(\vect{x})|^2d^2\vect{x}
  \nonumber
  \\
  \geq
  \frac{1}{4}\left[ \int_{\R^2}g(\vect{x})\tilde{h}(\vect{x}) 
         + h(\vect{x})\tilde{g}(\vect{x}) d^2\vect{x}
    \right]^2
    \nonumber
    \\
  = \left[ \int_{\R^2} Sc(g(\vect{x})\tilde{h}(\vect{x}))
         d^2\vect{x}
    \right]^2.
\end{gather}
\end{lem}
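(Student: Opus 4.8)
The plan is to recognize the common value appearing on the right-hand side as the square of a genuine real-valued inner product on $L^2(\R^2,\HQ)$, viewed as a real vector space, and then to reduce the claim to the classical Cauchy--Schwarz inequality for that inner product. Concretely, I would pair $g$ and $h$ by $\langle g,h\rangle := \int_{\R^2} Sc(g\tilde h)\,d^2\vect{x}$ and show the whole statement is an instance of $\langle g,h\rangle^2 \le \langle g,g\rangle\,\langle h,h\rangle$.

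First I would dispose of the final equality. Since conjugation reverses products, $\widetilde{g\tilde h} = \tilde{\tilde h}\,\tilde g = h\tilde g$, so the cross term collapses by the scalar-part identity $q+\tilde q = 2\,Sc(q)$ of Section~2: $g\tilde h + h\tilde g = g\tilde h + \widetilde{g\tilde h} = 2\,Sc(g\tilde h)$. Integrating and absorbing the factor $2$ converts $\tfrac14\bigl[\int (g\tilde h + h\tilde g)\,d^2\vect x\bigr]^2$ into $\bigl[\int Sc(g\tilde h)\,d^2\vect x\bigr]^2$, which is exactly the stated equality.

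Next I would verify that $\langle\cdot,\cdot\rangle$ is a positive-semidefinite symmetric real bilinear form. Writing the quaternion components out, $Sc(g\tilde h) = g_r h_r + g_i h_i + g_j h_j + g_k h_k$, so the pairing is nothing but the ordinary componentwise real $L^2$ inner product; this makes $\R$-bilinearity and symmetry transparent, and positivity follows from $\langle g,g\rangle = \int Sc(g\tilde g)\,d^2\vect x = \int |g|^2\,d^2\vect x \ge 0$. The core step is then the standard discriminant argument: for every real $\lambda$, expanding $(g-\lambda h)(\widetilde{g-\lambda h}) = g\tilde g - \lambda(g\tilde h + h\tilde g) + \lambda^2 h\tilde h$ and taking scalar parts gives $0 \le \int|g-\lambda h|^2\,d^2\vect x = \langle g,g\rangle - 2\lambda\langle g,h\rangle + \lambda^2\langle h,h\rangle$. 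A quadratic in $\lambda$ that is nonnegative for all real $\lambda$ has nonpositive discriminant, yielding $\langle g,h\rangle^2 \le \langle g,g\rangle\,\langle h,h\rangle$, which is the desired inequality; the degenerate case $\int|h|^2\,d^2\vect x = 0$ (where $h=0$ a.e.\ and both sides vanish) I would treat separately, or equivalently evaluate the quadratic at $\lambda = \langle g,h\rangle/\langle h,h\rangle$ when $\langle h,h\rangle>0$.

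The only point demanding care, and the mild obstacle here, is the noncommutativity of $\HQ$: I must preserve the order of factors in $g\tilde h$ versus $h\tilde g$ throughout and rely on the reversion identity $\widetilde{g\tilde h} = h\tilde g$ to fold the cross term into $2\,Sc(g\tilde h)$. Because the pairing is scalar-valued and the multiplier $\lambda$ is real, noncommutativity never mixes the two functions, and the argument otherwise reproduces the real Cauchy--Schwarz inequality verbatim.
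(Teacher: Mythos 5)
Your argument is correct and matches the paper's own (sketched) proof: the paper's remark following the lemma indicates exactly this route, expanding $\int_{\R^2}[g+\epsilon h][g+\epsilon h]^{\sim}\,d^2\vect{x}\geq 0$ as a nonnegative real quadratic and applying the discriminant argument, with the final equality following from $q+\tilde{q}=2\,Sc(q)$ and $\widetilde{g\tilde{h}}=h\tilde{g}$ just as you describe. Your write-up is in fact more complete than the paper's, since you also handle the degenerate case and make the real bilinear-form structure explicit.
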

\begin{rem}
The proof of Lemma \ref{lm:Swineq} can be based on the following inequality ($0 < \epsilon \in \R$)
\be
  \int_{\R^2} [g(\vect{x}) + \epsilon h(\vect{x})]
              [g(\vect{x}) + \epsilon h(\vect{x})]^{\sim} d^2\vect{x} \geq 0.
\ee
\end{rem}

\section{Directional QFT uncertainty principle}

In this section we state the directional QFT uncertainty principle
and prove it step by step.
\begin{thm}[Directional QFT UP]
For two arbitrary constant vectors 
$\vect{a}= a_1 \vect{e}_1 + a_2 \vect{e_2}\in \R^2, \vect{b}= b_1 \vect{e}_1 + b_2 \vect{e_2} \in \R^2$ (selecting two 
directions), and $f\in L^2(\R^2,\HQ)$, $|\vect{x}|^{1/2}f \in L^2(\R^2,\HQ)$
we obtain 
\begin{equation}
 \int_{\R^2} (\vect{a}\cdot \vect{x})^2 |f(\vect{x})|^2 d^2\vect{x}
 \int_{\R^2} (\vect{b}\cdot \bomega )^2  |\mathcal{F}\{f\}(\bomega )|^2 d^2\bomega
 \geq \frac{(2\pi )^2}{4}
      \left[ (\vect{a} \cdot \vect{b})^2 F_-^2  
             + (\vect{a} \cdot \vect{b}^{\prime})^2 F_+^2 \right] ,
\end{equation}
with the energies
\be
   F_{\pm} =  \int_{\R^2} |f_{\pm}(\vect{x})|^2 d^2 \vect{x},
   \qquad 
   \vect{b}^{\prime} = {\mathcal U}_{1}\vect{b} = -b_1 \vect{e}_1 + b_2 \vect{e_2}\in \R^2.
\ee
\end{thm}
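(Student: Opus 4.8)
The plan is to reduce the two-sided estimate to two independent one-sided estimates, one for each split part $f_\pm$, and then recombine them. First I would use the two modulus identities $|f|^2=|f_-|^2+|f_+|^2$ and $|\mathcal{F}\{f\}|^2=|\mathcal{F}\{f_-\}|^2+|\mathcal{F}\{f_+\}|^2$ (both consequences of Lemma \ref{lm:modid}) to write each factor on the left as a sum over the $\pm$ parts. Abbreviating $P_\pm=\int_{\R^2}(\vect{a}\cdot\vect{x})^2|f_\pm|^2\,d^2\vect{x}$ and $Q_\pm=\int_{\R^2}(\vect{b}\cdot\bomega)^2|\mathcal{F}\{f_\pm\}|^2\,d^2\bomega$, the left-hand side is $(P_-+P_+)(Q_-+Q_+)$, which is bounded below by $P_-Q_-+P_+Q_+$ since all four products are nonnegative. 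It therefore suffices to prove $P_-Q_-\ge\frac{(2\pi)^2}{4}(\vect{a}\cdot\vect{b})^2F_-^2$ and $P_+Q_+\ge\frac{(2\pi)^2}{4}(\vect{a}\cdot\vect{b}^{\prime})^2F_+^2$ separately.

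For the minus part I would convert the frequency integral $Q_-$ into a position integral. The relation $\mathcal{F}\{\vect{b}\cdot\nabla f_-\}(\bomega)=\vect{b}\cdot\bomega\,\mathcal{F}\{f_-\}(\bomega)\,\qj$ of Lemma \ref{lm:QFTvdiff} has modulus $|\mathcal{F}\{\vect{b}\cdot\nabla f_-\}(\bomega)|^2=(\vect{b}\cdot\bomega)^2|\mathcal{F}\{f_-\}(\bomega)|^2$ because $|\qj|=1$; the Parseval identity for the QFT, $\int_{\R^2}|\mathcal{F}\{g\}|^2\,d^2\bomega=(2\pi)^2\int_{\R^2}|g|^2\,d^2\vect{x}$, then gives $Q_-=(2\pi)^2\int_{\R^2}|\vect{b}\cdot\nabla f_-|^2\,d^2\vect{x}$. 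Next I would feed $g=(\vect{a}\cdot\vect{x})f_-$ and $h=\vect{b}\cdot\nabla f_-$ into the Schwartz inequality of Lemma \ref{lm:Swineq}. Since $\vect{a}\cdot\vect{x}$ is a real scalar it factors out, and using $Sc(q)=\tfrac12(q+\tilde q)$ together with the product rule one gets $Sc(f_-\,\vect{b}\cdot\nabla\tilde f_-)=\tfrac12\,\vect{b}\cdot\nabla|f_-|^2$, so the scalar product on the right becomes $\tfrac12\int_{\R^2}(\vect{a}\cdot\vect{x})\,\vect{b}\cdot\nabla|f_-|^2\,d^2\vect{x}$. Integration by parts (Lemma \ref{lm:iop}), whose boundary term vanishes under the decay hypotheses $f,\,|\vect{x}|^{1/2}f\in L^2(\R^2,\HQ)$, transfers the derivative onto $\vect{a}\cdot\vect{x}$, producing $\vect{b}\cdot\nabla(\vect{a}\cdot\vect{x})=\vect{a}\cdot\vect{b}$ and hence the value $-\tfrac12(\vect{a}\cdot\vect{b})F_-$. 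Squaring yields $P_-\,\tfrac{Q_-}{(2\pi)^2}\ge\tfrac14(\vect{a}\cdot\vect{b})^2F_-^2$, which is the minus estimate.

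The plus part is identical in structure but uses the reflected differential: I would take $h=\vect{b}^{\prime}\cdot\nabla f_+=(\mathcal{U}_1\vect{b})\cdot\nabla f_+$, so that the third relation of Lemma \ref{lm:QFTvdiff}, $\mathcal{F}\{(\mathcal{U}_1\vect{b})\cdot\nabla f_+\}(\bomega)=\vect{b}\cdot\bomega\,\mathcal{F}\{f_+\}(\bomega)\,\qj$, again produces a clean factor $(\vect{b}\cdot\bomega)^2$ and hence $Q_+=(2\pi)^2\int_{\R^2}|\vect{b}^{\prime}\cdot\nabla f_+|^2\,d^2\vect{x}$. Repeating the Schwartz-plus-integration-by-parts computation with $\vect{b}$ replaced by $\vect{b}^{\prime}$ gives the scalar product $-\tfrac12(\vect{a}\cdot\vect{b}^{\prime})F_+$ and therefore $P_+Q_+\ge\frac{(2\pi)^2}{4}(\vect{a}\cdot\vect{b}^{\prime})^2F_+^2$. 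Adding the two estimates and using $(P_-+P_+)(Q_-+Q_+)\ge P_-Q_-+P_+Q_+$ finishes the argument.

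I expect the main obstacle to be the correct bookkeeping of noncommutativity: one must verify that $\vect{a}\cdot\vect{x}$ and $\vect{b}\cdot\nabla$ truly act as commuting real-scalar operations so that the reduction $Sc(f_\pm\,\vect{b}\cdot\nabla\tilde f_\pm)=\tfrac12\,\vect{b}\cdot\nabla|f_\pm|^2$ is valid, and that the \emph{right} one of the three identities in Lemma \ref{lm:QFTvdiff} is paired with each split part. It is exactly the use of $\vect{b}$ for $f_-$ and of $\vect{b}^{\prime}=\mathcal{U}_1\vect{b}$ for $f_+$ that manufactures the two distinct inner products $\vect{a}\cdot\vect{b}$ and $\vect{a}\cdot\vect{b}^{\prime}$ on the right-hand side. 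A secondary check is that the hypotheses $f,\,|\vect{x}|^{1/2}f\in L^2$ indeed suppress the boundary term of Lemma \ref{lm:iop} for the scalar weights $|f_\pm|^2$.
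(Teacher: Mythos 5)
Your proof is correct and follows essentially the same route as the paper: split both integrals via the modulus identities, convert the frequency integrals with Lemma \ref{lm:QFTvdiff} and the Parseval theorem, apply the Schwartz inequality of Lemma \ref{lm:Swineq} together with $Sc\bigl(f_\pm\,\vect{b}\cdot\nabla\widetilde{f}_\pm\bigr)=\tfrac{1}{2}\,\vect{b}\cdot\nabla\bigl(f_\pm\widetilde{f}_\pm\bigr)$, and finish with integration by parts (Lemma \ref{lm:iop}). The only, harmless, deviation is that you discard the two cross terms by nonnegativity of the four products, whereas the paper retains them and shows their Schwartz lower bounds vanish via Lemma \ref{lm:smxprod}.
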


\begin{proof}
We now prove the directional QFT uncertainty principle by the following direct calculation.

\begin{align*}
   \int_{\R^2} (\vect{a}\cdot \vect{x})^2 &|f(\vect{x})|^2 d^2\vect{x}
   \int_{\R^2} (\vect{b}\cdot \bomega )^2  |\mathcal{F}\{f\}(\bomega )|^2 d^2\bomega
   \\  
   \stackrel{\pm\mbox{split}}{=} 
   &\left[\int_{\R^2} (\vect{a}\cdot \vect{x})^2 |f_-(\vect{x})|^2 d^2\vect{x}
   +\int_{\R^2} (\vect{a}\cdot \vect{x})^2 |f_+(\vect{x})|^2 d^2\vect{x}
   \right]
   \\
   &\left[\int_{\R^2} (\vect{b}\cdot \bomega )^2  |\mathcal{F}\{f_-\}(\bomega )|^2 d^2\bomega
   + \int_{\R^2} (\vect{b}\cdot \bomega )^2  |\mathcal{F}\{f_+\}(\bomega )|^2 d^2\bomega
   \right]
   \\
   \stackrel{\mbox{Lem.\ref{lm:QFTvdiff}}}{=}
   &{\Big[}\ldots {\Big]}
   \left[\int_{\R^2} |\mathcal{F}\{(\vect{b}\cdot \nabla )f_-\}(\bomega )(-\qj)|^2 d^2\bomega
   + \int_{\R^2}  |\mathcal{F}\{(\vect{b}^{\prime}\cdot \nabla )f_+\}(\bomega )(-\qj)|^2 d^2\bomega
   \right]
   \\
   \stackrel{\mbox{Parsev.}}{=}
   &\left[\int_{\R^2} |(\vect{a}\cdot \vect{x})f_-(\vect{x})|^2 d^2\vect{x}
   +\int_{\R^2} |(\vect{a}\cdot \vect{x})f_+(\vect{x})|^2 d^2\vect{x}
   \right]
   \\
   &(2\pi)^2\left[\int_{\R^2} |(\vect{b}\cdot \nabla )f_-(\vect{x} )|^2 d^2\vect{x}
   + \int_{\R^2}  |(\vect{b}^{\prime}\cdot \nabla )f_+(\vect{x} )|^2 d^2\vect{x}
   \right]
   \\
   = \,\,\, &(2\pi)^2\left[
   \int_{\R^2} |(\vect{a}\cdot \vect{x})f_-(\vect{x})|^2 d^2\vect{x}
   \int_{\R^2} |(\vect{b}\cdot \nabla )f_-(\vect{x} )|^2 d^2\vect{x}
   \right.
   \\
   &+
   \int_{\R^2} |(\vect{a}\cdot \vect{x})f_+(\vect{x})|^2 d^2\vect{x}
   \int_{\R^2}  |(\vect{b}^{\prime}\cdot \nabla )f_+(\vect{x} )|^2 d^2\vect{x}
   \\
   &+\int_{\R^2} |(\vect{a}\cdot \vect{x})f_-(\vect{x})|^2 d^2\vect{x}
   \int_{\R^2}  |(\vect{b}^{\prime}\cdot \nabla )f_+(\vect{x} )|^2 d^2\vect{x}
   \\
   &\left.
   +\int_{\R^2} |(\vect{a}\cdot \vect{x})f_+(\vect{x})|^2 d^2\vect{x}
   \int_{\R^2} |(\vect{b}\cdot \nabla )f_-(\vect{x} )|^2 d^2\vect{x}
   \right]
   \\
   \stackrel{\mbox{Schwartz}}{\leq}
   &(2\pi)^2\left[
   \left\{ \int_{\R^2} 
      Sc\left(\vect{a}\cdot \vect{x}f_- \, \vect{b}\cdot \nabla \widetilde{f}_-\right) d^2\vect{x}
   \right\}^2 
   +
   \left\{ \int_{\R^2} 
      Sc\left(\vect{a}\cdot \vect{x}f_+ \, \vect{b}^{\prime}\cdot \nabla \widetilde{f}_+\right) 
      d^2\vect{x}
      \right\}^2  
   \right.
   \\
   &+
   \Big\{ \int_{\R^2} 
      \underbrace{Sc\left(\vect{a}\cdot \vect{x}f_- \, \vect{b}^{\prime}\cdot \nabla \widetilde{f}_+\right)}_{ = 0 \,\,\, 
       \mbox{(Lem.\ref{lm:smxprod})}} 
   d^2\vect{x}
   \Big\}^2 
   +
   \Big\{ \int_{\R^2} 
      \underbrace{Sc\left(\vect{a}\cdot \vect{x}f_+ \, \vect{b}\cdot \nabla \widetilde{f}_-
   \right)}_{ = 0 \,\,\, \mbox{(Lem.\ref{lm:smxprod})}} d^2\vect{x}
   \Big\}^2 
   \Big]
   \\
   = \,\,\, &(2\pi)^2\left[
   \left\{ \int_{\R^2} 
      \vect{a}\cdot \vect{x} \frac{1}{2}\vect{b}\cdot \nabla (f_- \widetilde{f}_-) d^2\vect{x}
   \right\}^2 
   +
   \left\{ \int_{\R^2} 
      \vect{a}\cdot \vect{x} \frac{1}{2}\vect{b}^{\prime}\cdot \nabla (f_+ \widetilde{f}_+) 
      d^2\vect{x}
      \right\}^2  
   \right] 
\end{align*}
\begin{align}
   \stackrel{\mbox{Int. by p.}}{=}
   &\frac{(2\pi)^2}{4}
   \Big[
   \Big\{ \int_{\R^2} 
      \underbrace{\vect{b}\cdot \nabla(\vect{a}\cdot \vect{x})}_{= \vect{a}\cdot \vect{b}}    |f_-|^2
   d^2\vect{x}
   \Big\}^2 
   +
   \Big\{ \int_{\R^2} 
      \underbrace{\vect{b}^{\prime}\cdot \nabla(\vect{a}\cdot \vect{x})}_{= \vect{a}\cdot\vect{b}^{\prime}}  |f_+|^2 
      d^2\vect{x}
      \Big\}^2  
   \Big]
   \nonumber \\
   & \,\,\, =
   \frac{(2\pi)^2}{4}
   \Big[ \,
      (\vect{a}\cdot \vect{b})^2 \,\,F_-^2 
      + (\vect{a}\cdot\vect{b}^{\prime})^2 \,\,F_+^2 \,
   \Big] \, .
\end{align}
After using the Schwartz inequality of Lemma \ref{lm:Swineq} we further used
\begin{align}
Sc\left(f_- \, \vect{b}\cdot \nabla \widetilde{f}_-\right)
  = \frac{1}{2} \vect{b}\cdot \nabla \left(f_- \, \widetilde{f}_-\right),
  \,\,\,
  Sc\left(f_+ \, \vect{b}\cdot \nabla \widetilde{f}_+\right)
  = \frac{1}{2} \vect{b}\cdot \nabla \left(f_+ \, \widetilde{f}_+\right).
\end{align}
\end{proof}

\section{Generalization of directional uncertainty principle to spacetime}

The quaternions frequently appear as subalgebras of higher order
Clifford geometric algebras~\cite{PL:Clifford}. This is for example the case for the
spacetime algebra (STA)~\cite{DH:STA}, which is of prime importance in 
physics, and in applications where time matters as well 
(motion in time, video sequences, flow fields, ...). 
The quaternion subalgebra allows to introduce generalizations
of the QFT to functions in these higher order Clifford geometric algebras.
For example it allows to generalize the QFT to a spacetime FT~\cite{EH:QFTgen}.

\subsection{Spacetime algebra (STA)}

The spacetime algebra is the geometric algebra of $\R^{3,1}$. 
In $\R^{3,1}$ we can introduce the following orthonormal basis
\begin{equation}
  \{\vect{e}_t,\vect{e}_1, \vect{e}_2, \vect{e}_3  \},
  \quad
  -\vect{e}_t^2 = \vect{e}_1^2 = \vect{e}_2^2 = \vect{e}_3^2 = 1.
\end{equation}
In $\mathcal{G}(\R^{3,1})$ we thus get three anti-commuting 
blades that all square to minus one
\begin{equation}
  \vect{e}_t^2 = -1, 
  \quad
  i_3 = \vect{e}_1\vect{e}_2\vect{e}_3, \quad i_3^2 = -1,
  \quad
  i_{st} = \vect{e}_t\vect{e}_1\vect{e}_2\vect{e}_3, 
  \quad i_{st}^2=-1.
\end{equation}
The volume-time subalgebra of $\mathcal{G}(\R^{3,1})$ generated by these blades is indeed
isomorphic to the quaternion algebra~\cite{PG:quat}. 
\begin{equation}
  \label{eq:isom}
  \{1, \vect{e}_t, i_3, i_{st} \} \longleftrightarrow \{1, \qi, \qj, \qk\}
\end{equation}
This isomorphism allows us now to introduce the  $\pm$ split to spacetime algebra,
which now turns out to be a very real (physical) spacetime split 
\begin{equation}
  f_{\pm} = \frac{1}{2}(f \pm \vect{e}_t f \vect{e}_t^{*}),
\label{eq:fpmsd}
\end{equation}
where
\be
  \vect{e}_t^{*} 
  = \vect{e}_t i_{st}^{-1} 
  = - \vect{e}_t i_{st} 
  = - \vect{e}_t \vect{e}_t i_{3} 
  = i_{3}.
\ee
The time direction $\vect{e}_t$ determines therefore the complimentary 3D space
with pseudoscalar $i_3$ as well!

\subsection{From the QFT to the spacetime Fourier transform (SFT)\label{sc:SFT}}
The spacetime Fourier transform
maps {16D} spacetime algebra functions 
$f : \R^{3,1} \rightarrow \G_{3,1}$
to {16D} spacetime spectrum functions
$\overset{\diamond}{f}: \R^{3,1} \rightarrow \G_{3,1}  $. It  is defined in the following way
\begin{equation}
  f \rightarrow \mathcal{F}_{SFT}\{f\}(\bomega) 
  = \overset{\diamond}{f}(\bomega) 
  = \int_{\R^{3,1}} e^{-\vect{e}_t \,t\omega_t} f(\vect{x}) \,
    e^{-i_3 \vec{x}\cdot \vec{\omega}} d^4\vect{x} \,,
  \label{eq:sftgen}
\end{equation}
with 
\begin{itemize}
\item
spacetime vectors
$\vect{x}= t\vect{e}_t + \vec{x} \in \R^{3,1}, \,\,
 \vec{x} = x\vect{e}_1+y\vect{e}_2+z\vect{e}_3 \in \R^{3} $
\item
spacetime volume $d^4\vect{x} = dt dx dy dz$
\item
spacetime frequency vectors
$\bomega= \omega_t\vect{e}_t + \vec{\omega} \in \R^{3,1}, \,\,
 \vec{\omega} = \omega_1\vect{e}_1+\omega_2\vect{e}_2+\omega_3\vect{e}_3 \in \R^{3} $ 
\end{itemize}
\begin{rem}
The 3D integration part 
$$
  \int f(\vect{x}) \,
  e^{-i_3 \vec{x}\cdot \vec{\omega}} d^3\vec{x}
$$
in \eqref{eq:sftgen} fully corresponds to the GA FT in $\G_{3}$, as stated in
\eqref{eq:3DCFT}, compare~\cite{MH:CFT,HM:ICCA7}.
\end{rem}

The $\pm$ split of the QFT can now, via the isomorphism \eqref{eq:isom} of
quaternions to the volume-time subalgebra of the spacetime algebra, be extended
to splitting general spacetime algebra multivector functions over $\R^{3,1}$.
This leads to the following interesting result~\cite{EH:QFTgen}.
\begin{gather}
  \overset{\diamond}{f} = 
  \overset{\diamond}{f}_+ + \overset{\diamond}{f}_-
  \stackrel{}{=} \int_{\R^{3,1}} f_{+}\,e^{-i_3 (\, \vec{x}\cdot \vec{\omega} -\, t\omega_t\,)} d^4x
               + \int_{\R^{3,1}} f_{-}\,e^{-i_3 (\, \vec{x}\cdot \vec{\omega} +\, t\omega_t\,)}  d^4x.
  \label{eq:SFTfpmrcomb}
\end{gather}
This result shows us that the SFT is identical to a sum of 
right and left propagating multivector wave packets.
We therefore see that these physically important wave packets arise
absolutely naturally from elementary purely algebraic considerations.

\subsection{Directional 4D spacetime uncertainty principle}

We now generalize the directional QFT uncertainty principle to spacetime algebra
multivector functions. 
\begin{thm}[Directional 4D spacetime uncertainty principle]
For two arbitrary constant spacetime vectors $\vect{a},\vect{b} \in \R^{3,1}$ (selecting two 
directions), and $f\in L^2(\R^{3,1},\G_{3,1})$, $|\vect{x}|^{1/2}f \in L^2(\R^{3,1},\G_{3,1})$
we obtain 
\begin{gather*}
 \int_{\R^{3,1}} (a_t t-\vec{a}\cdot \vec{x})^2 |f(\vect{x})|^2 d^4\vect{x}
 \int_{\R^{3,1}} (b_t \omega_t - \vec{b}\cdot \vec{\omega} )^2  |\mathcal{F}\{f\}(\bomega )|^2 d^4\bomega
 \\
 \geq \frac{(2\pi )^4}{4}
      \left[ (a_t b_t-\vec{a} \cdot \vec{b})^2 F_-^2  
             + (a_t b_t+\vec{a} \cdot \vec{b})^2 F_+^2 \right] ,
\end{gather*}
with energies of the left and right traveling wave packets:
\be
   F_{\pm} =  \int_{\R^2} |f_{\pm}(\vect{x})|^2 d^2 \vect{x}.
\ee
\end{thm}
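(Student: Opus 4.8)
The plan is to carry the proof of the directional QFT uncertainty principle over to $\G_{3,1}$ essentially verbatim, using the algebra isomorphism \eqref{eq:isom} that identifies $\vect{e}_t\leftrightarrow\qi$, $i_3\leftrightarrow\qj$, $i_{st}\leftrightarrow\qk$ and sends the quaternion split to the spacetime split \eqref{eq:fpmsd}. Before the main calculation I would check that the algebraic lemmas of the $\HQ$ proof survive when $f$ takes values in the full $16$-dimensional $\G_{3,1}$ rather than in the $4$-dimensional volume-time subalgebra. The map $f\mapsto\vect{e}_t f i_3$ is an involution, since $\vect{e}_t^2=i_3^2=-1$ give $\vect{e}_t(\vect{e}_t f i_3)i_3=f$, so the $f_\pm$ of \eqref{eq:fpmsd} are genuine complementary projections on all of $\G_{3,1}$. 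I would then verify that this involution is compatible with reversion and the multivector magnitude, so that the modulus identity of Lemma \ref{lm:modid}, $|f|^2=|f_-|^2+|f_+|^2$, and the vanishing of the mixed scalar products of Lemma \ref{lm:smxprod} still hold pointwise; the Schwartz inequality of Lemma \ref{lm:Swineq} and the integration-by-parts formula of Lemma \ref{lm:iop} then extend to $L^2(\R^{3,1},\G_{3,1})$ with the $4$D vector differential $\vect{b}\cdot\nabla=b_t\partial_t+b_1\partial_x+b_2\partial_y+b_3\partial_z$.

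With these in place the computation mirrors the $\HQ$ case line for line. First I would split each factor on the left via $f=f_-+f_+$ and the two modulus identities. Next I would pass to the complex forms of the SFT recorded in \eqref{eq:SFTfpmrcomb}, and establish the SFT analogue of Lemma \ref{lm:QFTvdiff}: here the reflection $\mathcal{U}_t$ flips the time frequency, $\mathcal{U}_t\bomega$ sending $\omega_t\mapsto-\omega_t$, and one finds that $\vect{b}\cdot\nabla f_+$ and $(\mathcal{U}_t\vect{b})\cdot\nabla f_-$ both acquire the same scalar frequency factor $\vect{b}\cdot\bomega$ (with the $i_3$ on the right, as noncommutativity demands). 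A $4$D Plancherel theorem for the double-sided SFT, which factors into a one-dimensional time transform and a three-dimensional space transform, the four dimensions together contributing $(2\pi)^4$, then replaces each frequency integral by $(2\pi)^4$ times the corresponding $\vect{x}$-side integral of the vector differential. Applying Lemma \ref{lm:Swineq} produces four products; the two cross terms vanish by Lemma \ref{lm:smxprod}, and the two survivors are rewritten as $\tfrac12\vect{b}\cdot\nabla(f_+\tilde f_+)$ and $\tfrac12(\mathcal{U}_t\vect{b})\cdot\nabla(f_-\tilde f_-)$ and integrated by parts with Lemma \ref{lm:iop}.

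The right-hand side is produced by the signature $\vect{e}_t^2=-1$. On the left the integrands are already in the stated form, since $\vect{a}\cdot\vect{x}=-a_t t+\vec{a}\cdot\vec{x}$ and $\vect{b}\cdot\bomega=-b_t\omega_t+\vec{b}\cdot\vec{\omega}$ make $(\vect{a}\cdot\vect{x})^2=(a_t t-\vec{a}\cdot\vec{x})^2$ and $(\vect{b}\cdot\bomega)^2=(b_t\omega_t-\vec{b}\cdot\vec{\omega})^2$. After the integration by parts the two surviving terms carry the constants $\vect{b}\cdot\nabla(\vect{a}\cdot\vect{x})=\vect{a}\cdot\vect{b}$ and $(\mathcal{U}_t\vect{b})\cdot\nabla(\vect{a}\cdot\vect{x})=\vect{a}\cdot(\mathcal{U}_t\vect{b})$; evaluating these two Minkowski inner products gives $-(a_t b_t-\vec{a}\cdot\vec{b})$ and $a_t b_t+\vec{a}\cdot\vec{b}$, whose squares are the coefficients $(a_t b_t-\vec{a}\cdot\vec{b})^2$ and $(a_t b_t+\vec{a}\cdot\vec{b})^2$ multiplying the two energies $F_\pm^2$, together with the overall factor $\tfrac{(2\pi)^4}{4}$.

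The main obstacle is not the bookkeeping but the opening step: one must be certain the split-product lemmas, originally established inside the quaternion subalgebra, genuinely hold for $\G_{3,1}$-valued $f$, because $f_\pm=\tfrac12(f\pm\vect{e}_t f i_3)$ mixes grades across the whole algebra, so $Sc(f_+\tilde f_-)=0$ and $|f|^2=|f_-|^2+|f_+|^2$ must be re-derived from the compatibility of the involution with reversion rather than merely quoted. The second delicate point, and the one most easily mishandled, is keeping the $\vect{e}_t^2=-1$ signature consistent throughout: whether a given split part $f_\pm$ carries the reflected or the unreflected Minkowski phase in \eqref{eq:SFTfpmrcomb} is precisely what decides which energy $F_\pm^2$ pairs with which of the two coefficients, and a single sign slip there would corrupt the result. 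Establishing the $4$D Plancherel identity with its correct $(2\pi)^4$ normalization and the right-hand placement of the $i_3$ factor is the remaining routine but necessary check.
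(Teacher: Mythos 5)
Your overall route is the same as the paper's: the paper itself offers only a remark that the proof is ``strictly analogous'' to the QFT case with $\int_{\R^2}\to\int_{\R^{3,1}}$ and $(2\pi)^2\to(2\pi)^4$, and your proposal is essentially that translation carried out in detail, so there is no divergence of method. You also correctly identify the two places where the translation is not automatic — but you leave both unresolved, and one of them actually bites. First, the algebraic lemmas: you say the modulus identity and $Sc(f_+\widetilde{f}_-)=0$ ``must be re-derived from the compatibility of the involution with reversion rather than merely quoted,'' but you do not do it, and it is not routine. Reversion in $\G_{3,1}$ restricted to the volume--time subalgebra $\{1,\vect{e}_t,i_3,i_{st}\}$ is \emph{not} the quaternion conjugate under \eqref{eq:isom} (it fixes $\vect{e}_t$ and $i_{st}$ and flips only $i_3$), and $\langle M\widetilde{M}\rangle_0$ is not positive definite on $\G_{3,1}$ (e.g. $\vect{e}_t\widetilde{\vect{e}}_t=-1$). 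So $|f|^2$, the modulus identity, and the Schwartz inequality $\int|g|^2\int|h|^2\ge[\int Sc(g\widetilde{h})]^2$ all require a choice of conjugation and norm that the isomorphism transports correctly to the full $16$-dimensional algebra; without that, the central inequality is not even available. This is a gap the paper shares, but your proof inherits it rather than closing it.

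Second, the sign bookkeeping you flag as ``most easily mishandled'' is left undecided at exactly the decisive moment. From \eqref{eq:SFTfpmrcomb}, $f_+$ carries the phase $\vec{x}\cdot\vec{\omega}-t\omega_t=\vect{x}\cdot\bomega$ (Minkowski) and $f_-$ carries $\vect{x}\cdot(\mathcal{U}_t\bomega)$, so, as you say, $\vect{b}\cdot\nabla f_+$ produces the factor $\vect{b}\cdot\bomega$ while $f_-$ needs $(\mathcal{U}_t\vect{b})\cdot\nabla$. Carrying that pairing through Schwartz and integration by parts attaches $\bigl(\vect{b}\cdot\nabla(\vect{a}\cdot\vect{x})\bigr)^2=(a_tb_t-\vec{a}\cdot\vec{b})^2$ to $F_+^2$ and $\bigl((\mathcal{U}_t\vect{b})\cdot\nabla(\vect{a}\cdot\vect{x})\bigr)^2=(a_tb_t+\vec{a}\cdot\vec{b})^2$ to $F_-^2$, which is the transpose of the pairing in the printed statement (and is also what a literal translation of the QFT theorem gives, since there $F_-^2$ carries the unreflected coefficient $(\vect{a}\cdot\vect{b})^2\mapsto(a_tb_t+\vec{a}\cdot\vec{b})^2$). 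Your closing phrase about ``the coefficients \dots multiplying the two energies $F_\pm^2$'' never commits to which coefficient multiplies which energy, so the proposal does not actually establish the inequality as stated: you must either exhibit the assignment and reconcile it with the theorem, or conclude that the printed pairing of $F_-^2$ and $F_+^2$ should be interchanged.
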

\begin{rem}
The proof is strictly analogous to the proof of the directional QFT uncertainty principle. 
In that proof we simply need to replace the integral $\int_{\R^{2}}$ by the integral $\int_{\R^{3,1}}$,
the infinitesimal 2D area element $d^2\vect{x}$ by the infinitesimal 4D volume element $d^4\vect{x}$, etc.
Applying the Parseval theorem in four dimensions leads to the factor of $(2\pi)^4$ instead of $(2\pi)^2$.
\end{rem}

\section{Conclusion}

We first studied the $\pm$ split of quaternions and its effects
on the double sided QFT. Based on this
we have established a new \textit{directional} QFT uncertainty principle.

Via isomorphisms of the quaternion algebra $\mathbb{H}$ to Clifford
geometric subalgebras, the QFT can be generalized to these Clifford geometric algebras. 
This generalization to higher dimensional algebras is also
possible for the new directional QFT uncertainty principle. We
demonstrated this with the physically most relevant
generalization to spacetime algebra functions. 

There the quaternion $\pm$ split corresponds to the relativistic split of spacetime
into time and space, e.g. the rest frame of an observer. The split of the SFT corresponds then to analyzing a spacetime
multivector function in terms of left and right travelling multivector wave packets~\cite{EH:QFTgen}.
The energies of these wave packets determine together with two arbitrary spacetime directions
(one spacetime vector and one relativistic wave vector) the resulting uncertainty threshold. 

Regarding the proliferation of higher dimensional theories in theoretical physics,
and the model of Euclidean space in the socalled conformal geometric algebra $\G_{4,1}$,
which has recently become of interest for applications in
computer graphics, computer vision and geometric reasoning~\cite{DFM:GACS,HL:IAGR}, 
even higher dimensional generalizations of the directional QFT uncertainty
principle may be of interest in the future.

%






\subsection*{Acknowledgment}

\begin{quote}
I do believe in God's creative power in having brought it all into being in the first place, 
I find that studying the natural world 
is an opportunity to observe 
the majesty, the elegance, the intricacy 
of God's creation.~\cite{FC:Time}
\end{quote}

I thank my family for their constant loving support, as well as 
B. Mawardi and S. Buchholz.
I further thank the 
Cognitive Systems group (LS Sommer) in Kiel for their hospitality.

\end{document}